\documentclass[10pt]{article}

\usepackage{amsmath}
\usepackage{amssymb}
\usepackage{amscd}
\begin{document}

 \title {\textbf{Orthogonal Projections Based on Hyperbolic and Spherical $n$-Simplex}}

\author{ Baki Karl\i\v{g}a$^1$,~Murat Sava\d{s}$^1$~and ~{Atakan T. Yakut}$^2$ \\ {\footnotesize $^1$ Gazi University,  Faculty of Science, Department of Mathematics~~~~~~~~~~~~} \\ {\footnotesize 06500, Ankara-Turkey~~~~~~~~~~~~~~~~~~~~~~~~~~~~~~~~~~~~~~~~~~~~} \\ {\footnotesize $^2$ Nigde University, Art and Sciences Faculty, Department of Mathematics} \\ {\footnotesize ~51100,~Nigde-Turkey~~~~~~~~~~~~~~~~~~~~~~~~~~~~~~~~~~~~~~~~~~~~~~}
\\{\noindent\footnotesize e-mail: karliaga@gazi.edu.tr}, {\footnotesize msavas@gazi.edu.tr}, {\footnotesize sevaty@nigde.edu.tr}}
\date{}

\maketitle
\newtheorem {thm}{Theorem}[section]
\newtheorem{lem}[thm]{Lemma}
\newtheorem{prop}[thm]{Proposition}
\newtheorem{cor}[thm]{Corollary}
\newtheorem{df}[thm]{Definition}
\newtheorem{nota}{Notation}
\newtheorem{note}[thm]{Remark}
\newtheorem{ex}[thm]{Example}
\newtheorem{exs}[thm]{Examples}
\newtheorem{rmk}[thm]{Remark}
\newtheorem{quo}[thm]{Question}
\newenvironment{proof}{\par\noindent{\bf Proof \,}}{$\hfill \Box$\par\bigskip}

{\begin{center}
 {\bf 2010 MSC:} 51M09, 51M10, 51M20, 51M25, 52A38, 52A55
\end{center}}

{\bf Key words:} Projection, hyperbolic, spherical, simplex, Gram matrix, edge matrix

\begin{abstract}
In this paper, orthogonal projection along a geodesic to the chosen $k-$plane is introduced using edge and Gram matrix of an $n-$simplex in hyperbolic or spherical $n-$space.
The distance from a point to $k-$plane is obtained by the orthogonal projection. It is also given the perpendicular foots from a point to $k-$plane of hyperbolic and spherical $n-$space.

\end{abstract}

\section{Introduction}

\hspace{0,5cm} One of the fundamental notions in geometry is orthogonal projection and also studies extensively
through the long history of mathematics and physics.
There are many applications of orthogonal projection. The concept of orthogonal projection plays an important role in the scattering theory, the theory of many-body resonance and different branches of theoretical and mathematical physics.

\vspace{0,5cm}
Let $R_{1}^{n+1}$ be  $(n+1)-$dimensional vector space
equipped with the scalar product $\langle,\rangle$ which is defined by
\[
\langle x,y\rangle=-x_{1}y_{1}+\sum\limits_{i=2}^{n+1}x_{i}y_{i} .
\]
 \noindent If the restriction of scalar product on a
subspace $W$ of ${R_{1}}^{n+1}$ is positive definite, then the
subspace $W$ is called {\it space-like}; if it is positive
semi-definite and degenerate, then $W$ is called {\it light-like};
if $W$ contains a time-like vector of ${R_{1}}^{n+1}$, then $W$ is
called {\it time-like}.

\vspace{0.2cm}

${S}_{1}^{n}~=~\{x\in {R_{1}}^{n+1}\mid \langle x,x\rangle=1\}$ is called {\it de Sitter n-space}. The $n-$dimensional {\it unit pseudo-hyperbolic space} is
defined as
\[
H_{0}^{n}~=~\left\{ x\in {R_{1}}^{n+1}\mid \langle x,x\rangle=-1\right\},
\]
\noindent which has two connected components, each of which can be
considered as a model for the {\it n-dimensional hyperbolic space}
${H}^n$.~Throughout this paper we consider {\it hyperbolic $n-$space}
\[
{\ H}^n =\{x\in H_{0}^{n}\mid x_{1}> 0\}.
\]
\noindent Hence, each pair of points $p_{i},p_{j}$ in ${ H}^n$
satisfy $\langle p_{i},p_{j}\rangle$ $< 0$. The{ \it hyperbolic distance} for $p, q \in H^n$ is defined by
$arccosh(-\langle p,q\rangle)$. Since each $e\in S_1^n$ determines a time-like hyperplane of $R_1^{n+1}$, we have hyperplane $e^{\bot }\cap H^{n}$ of $H^{n}$.

\vspace{0.2cm}
\noindent Let $R^{n+1}$ be  $(n+1)-$dimensional vector space
equipped with the scalar product ${\langle,\rangle}_E$ which is defined by
\[
\langle x,y\rangle_E=\sum\limits_{i=1}^{n+1}x_{i}y_{i} .
\]
The $n$-dimensional {\it unit spherical space} $S^n$ is given by
$$S^n=\{x\in {R^{n+1}}|\langle x,x\rangle_E=1\}.$$
\noindent The spherical distance $d_s(p,q)$ between $p$ and $q$ is given by $arccos(\langle p,q\rangle_E).$

\vspace{0.2cm}
\noindent We consider $W$ is a vector subspace spanned by the vectors
$e_{1}, e_{2}, ..., e_{n-k}$ in ${ S}_1^n$. By using Lemma 27 of
\cite{On}, one can easily see that $W$ is $(n-k)-$dimensional
time-like subspace and $V= e_{1}^{\bot }\cap e_{2}^{\bot }\cap
...\cap e_{n-k}^{\bot }$ is $(k+1)-$dimensional time-like
subspace of $R_{1}^{n+1}$. Consequently, for $i=1, 2, ..., n-k$, the
hyperplane $e_{i}^{\bot }\cap{H}^{n}$ intersect at the
time-like $k-$plane $V\cap{H}^{n}$ of $H^n$. One can define the same tools for spherical $n-$space.

 \vspace{0.2cm}
\noindent Let $\bigtriangleup $ be  a hyperbolic or spherical $n-$simplex with
vertices $p_{1}, ..., p_{n+1}$, and $\bigtriangleup_{i}$ be the face
opposite to vertex $p_{i}$. Then, according to the first
section of \cite{KY}, we have the edge matrix $M$ and Gram matrix $G$
of $\bigtriangleup $. Let $|M|$ and $M_{ij}$ be the determinant and $ijth-$minor of $M$, then  the unit outer normal vector of $\bigtriangleup_{i}$ is given by
$$
e_{i}= \frac{-  \epsilon\displaystyle{\sum_{j=1 }^{n+1} {M_{ij}p
_{j}}}}{\sqrt{{M_{ii}}|M|}} ,~~~i=1,...,n+1,
$$
where $ \epsilon$ is the curvature of space.

 \vspace{0.2cm}
The intersection of $ H^{n}$ with
$(k+1)-$dimensional time-like subspace is called $k-$dimensional
plane of $ H^{n}$ \cite{IT}.
Similarly, a $k-$plane of spherical space is given by the same way.

 \vspace{0.2cm}
When a geodesic is drawn
orthogonally from a point to a $k-$plane, its intersection with
the $k-$plane is known as {\it perpendicular foot} on that $k-$plane in ${H}^n$ or ${S}^{n}$ .
The length of a geodesic segment bounded by a point and its perpendicular foot is called
{\it the distance between that point and $k-$plane}. The distance between a vertex and its any opposite $k-$face is called {\it $k-$face altitude} of an $n-$simplex.

 \vspace{0.2cm}
The orthogonal projection to $2-$plane in Euclidean
space is well-known (see \cite{Re},\cite{Ar},\cite{Br},\cite{BGT}). The orthogonal projection to $k-$plane in
Euclidean space is given in \cite{Ka}. The orthogonal projection
taking a point in $H^n$ and mapping it to its
perpendicular foot on a hyperplane are studied in \cite{IT} and
\cite{AKS}, respectively. The distance between a point and a
hyperbolic(spherical) hyperplane is introduced in \cite{Vi}. The altitude of $(n-1)-$face of hyperbolic $n-$simplex is given
in \cite{DMP}.

\vspace{0.2cm}
The orthogonal projection taking a point along a geodesic and mapping to its
perpendicular foot, where geodesic meets orthogonally the chosen
$k-$plane of projection, has not been studied. The aim
of this paper is to study such orthogonal projections according to the edge
matrix of a simplex in $H^{n}$ or $S^{n}$.

\vspace{0.2cm}
Let $m^{k+1}$  be the determinant of sub-matrix $M( k+1,...,n+1)$ of $M$ and $g^{k+1}$ be the determinant of
sub-matrix $ G( k+1,...,n+1)$ of $G$. Suppose that ${m}_{i}^{j}$ and ${g}_{i}^{j}$ be the
determinant of sub-matrix ${M \left(\begin{array}{cccc}
1&...&k+1&i
\\ 1&...&k+1&j \end{array}\right)}_{
i,j=k+2, ..., n+1}$ and ${G
\left( \begin{array}{cccc}1&...& k+1&i\\
1&...&k+1&j\end{array} \right)}_{
i,j=k+2, ..., n+1},$ respectively.

\vspace{0.2cm}
\begin{lem}\label{1}
 \noindent Let $\bigtriangleup$ be a hyperbolic or spherical $n-$simplex with the edge matrix $M$ and
Gram matrix $G$. Let $M_{ii}$ and $G_{ii}$ be
$i$th minor of $M$ and  $G$, respectively. Then
 $ M^{-1} = T G T$  and $G^{-1} =TMT$
 \vspace{0.2cm}
 \\where  $ T = \displaystyle\left[ \sqrt\frac{G_{ii}}{\epsilon|G| }~\delta _{ij}\right] =
\displaystyle \left[ \sqrt\frac{M_{ii}}{\epsilon|M| }~\delta
_{ij}\right]$.
\end{lem}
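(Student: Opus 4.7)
The plan is to prove the two identities by expanding the Gram matrix $G$ of the face normals directly from the explicit formula for $e_{i}$ given in the introduction, and then to read off the companion identity and the two expressions for $T$ as diagonal-entry comparisons.

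First I would assemble the formula $e_{i}=-\epsilon\sum_{j}M_{ij}p_{j}/\sqrt{M_{ii}|M|}$ into a single matrix identity. Let $P$ and $E$ denote the $(n+1)\times(n+1)$ matrices whose rows are the vertices $p_{i}$ and the face normals $e_{i}$ respectively, and let $D$ be the diagonal matrix with $D_{ii}=\sqrt{M_{ii}|M|}$. Since $M$ is symmetric its cofactor matrix coincides with $\mathrm{adj}(M)$, so the formula reads
\[E=-\epsilon\,D^{-1}\,\mathrm{adj}(M)\,P.\]
Both $M$ and $G$ are Gram-type matrices for the ambient bilinear form, so $M=PJP^{T}$ and $G=EJE^{T}$, where $J=\mathrm{diag}(-1,1,\ldots,1)$ in the hyperbolic case and $J=I$ in the spherical case.

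Substituting the matrix form of $E$ into $G=EJE^{T}$ and using $PJP^{T}=M$ together with the identities $\mathrm{adj}(M)\cdot M=|M|\,I$ and $\mathrm{adj}(M)=|M|\,M^{-1}$, the triple product $\mathrm{adj}(M)\cdot M\cdot\mathrm{adj}(M)^{T}$ collapses to $|M|^{2}M^{-1}$, giving
\[G=|M|^{2}\,D^{-1}\,M^{-1}\,D^{-1},\]
which rearranges to $M^{-1}=(D/|M|)\,G\,(D/|M|)$. The diagonal matrix $D/|M|$ has entries $\sqrt{M_{ii}/|M|}$, so this is exactly $M^{-1}=TGT$, with the $\epsilon$ appearing inside the radical only as a sign-normaliser so that the square root is real in either signature. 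The companion identity $G^{-1}=TMT$ is then immediate: inverting $M^{-1}=TGT$ gives $M=T^{-1}G^{-1}T^{-1}$, hence $G^{-1}=TMT$. Finally, to verify that the two expressions for $T$ agree, I would compare $(i,i)$-entries: $(M^{-1})_{ii}=T_{ii}^{2}\,\langle e_{i},e_{i}\rangle=T_{ii}^{2}$, since the $e_{i}$ are unit normals, whereas $(G^{-1})_{ii}=T_{ii}^{2}\,\langle p_{i},p_{i}\rangle=\epsilon\,T_{ii}^{2}$; combining with Cramer's rule $(M^{-1})_{ii}=M_{ii}/|M|$ and $(G^{-1})_{ii}=G_{ii}/|G|$ yields both stated formulas for $T_{ii}$.

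The main obstacle is sign bookkeeping rather than algebra. One must check in each signature that $M_{ii}|M|$, $M_{ii}/(\epsilon|M|)$ and $G_{ii}/(\epsilon|G|)$ are all non-negative so that every square root is real; the $\epsilon$-factor in the denominator is inserted precisely to absorb the sign of $\langle p_{i},p_{i}\rangle$ and to make the two presentations of $T_{ii}$ coincide in both the hyperbolic and the spherical cases.
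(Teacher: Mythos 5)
The paper itself offers no argument for this lemma (the ``proof'' is a bare citation to \cite{K}), so your derivation is necessarily a different route; the skeleton you chose --- write $E=-\epsilon D^{-1}\mathrm{adj}(M)P$, compute $G=EJE^{T}$, and collapse $\mathrm{adj}(M)\,M\,\mathrm{adj}(M)=|M|^{2}M^{-1}$ --- is the standard and correct way to obtain this duality, and the algebraic steps ($\mathrm{adj}(M)^{T}=\mathrm{adj}(M)$ for symmetric $M$, the rearrangement to $M^{-1}=(D/|M|)\,G\,(D/|M|)$, and the inversion giving $G^{-1}=TMT$) are all sound.

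The genuine gap is the point you defer to ``sign bookkeeping'': it is not cosmetic, and as written your argument does not prove the stated identity in the hyperbolic case. You identify the edge matrix as $M=PJP^{T}$, i.e.\ with diagonal entries $\langle p_{i},p_{i}\rangle=\epsilon$; but the edge matrix of these papers has diagonal entries $1$ (entries $\cosh d_{ij}$, resp.\ $\cos d_{ij}$), i.e.\ $M=\epsilon\,PJP^{T}$. With your identification the computation yields $T_{ii}^{2}=M_{ii}/|M|$, whereas the lemma asserts $T_{ii}^{2}=M_{ii}/(\epsilon|M|)$; these differ by a factor of $\epsilon$, and since $T_{ii}T_{jj}$ enters every entry of $TGT$, replacing one by the other multiplies the whole right-hand side by $\epsilon$ --- a global sign flip when $\epsilon=-1$, not a ``normaliser.'' Your own closing diagonal comparison exposes this: from $M^{-1}=TGT$ you get $T_{ii}^{2}=M_{ii}/|M|$, while from $G^{-1}=TMT$ you get $T_{ii}^{2}=G_{ii}/(\epsilon|G|)$, and these two conclusions are inconsistent with the lemma's claim that both radicands equal $M_{ii}/(\epsilon|M|)$. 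To close the argument you must fix the actual definitions of $M$ and $G$ (diagonal $1$, entries $\epsilon\langle p_{i},p_{j}\rangle$ and $\langle e_{i},e_{j}\rangle$ respectively), carry the resulting extra factor of $\epsilon$ through $PJP^{T}=\epsilon^{-1}M=\epsilon M$ in the triple product, and verify the sign relation $G_{ii}/|G|=\epsilon\,M_{ii}/|M|$ that reconciles the two presentations of $T$; without that, the identity you have actually established is $M^{-1}=\epsilon\,TGT$ rather than $M^{-1}=TGT$.
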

\begin{proof}
It can be seen from \cite{K}.
\end{proof}

Let $M^{11}, M^{12}, M^{22}$ and $G^{11}, G^{12}, G^{22}$ be $(k+1)$x$(k+1)$, $(k+1)$x$(n-k)$, $(n-k)$x$(n-k)$ types sub-matrices of $M$ and $G$, respectively.
Suppose that $M, G$ and diagonal matrix $T$ partitioned as $\left[\begin{array}{cc} M^{11}&M^{12}\\M^{12}&M^{22}
\end{array}\right], \left[\begin{array}{cc}
G^{11}&G^{12}\\G^{12}&G^{22} \end{array}\right]$ and
 $
\left[\begin{array}{cc} T^{11}&0\\0&T^{22} \end{array}\right]$, respectively.

\vspace{0.2cm}
\noindent Concerning Lemma~\ref{1} along with  Schur complement of a
symmetric matrix, we have the following lemma.

\begin{lem}\label{2}
\noindent Let $ \displaystyle S_ {M^{ii}}$ and
$\displaystyle S_ {G^{ii}}$ be Schur complements of the
sub-matrices $ M^{ii}$ and $ G^{ii}$. Then
$$
\left(M^{ii}\right)^{-1}=T^{ii}\displaystyle
S_{G^{jj}}T^{ii},~
\left(G^{ii}\right)^{-1}=T^{ii}\displaystyle
S_{M^{jj}}T^{ii} ~~~i \neq j; ~~i,j=1,2.
$$
\end{lem}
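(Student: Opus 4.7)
The plan is to combine Lemma~\ref{1} with the standard block-matrix inversion formula expressed through Schur complements. Recall that for a symmetric block matrix $M = \left[\begin{array}{cc} M^{11} & M^{12} \\ (M^{12})^{T} & M^{22} \end{array}\right]$, the inverse has the property that its lower-right block equals $(S_{M^{11}})^{-1}$ and its upper-left block equals $(S_{M^{22}})^{-1}$, where the Schur complement taken is always the one of the \emph{opposite} diagonal block. The same identities apply with $M$ replaced by $G$.

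Next I would exploit the fact that $T = \left[\begin{array}{cc} T^{11} & 0 \\ 0 & T^{22} \end{array}\right]$ is block-diagonal. Consequently the product $TGT$ has the simple block form whose $(1,1)$-block is $T^{11} G^{11} T^{11}$ and whose $(2,2)$-block is $T^{22} G^{22} T^{22}$. By Lemma~\ref{1} we have $M^{-1} = TGT$, so comparing the $(2,2)$-blocks of the two expressions for $M^{-1}$ yields
$$
(S_{M^{11}})^{-1} = T^{22} G^{22} T^{22}.
$$
Inverting both sides and using that $T^{22}$ is diagonal (hence commutes with itself and is easy to invert) gives $(G^{22})^{-1} = T^{22} S_{M^{11}} T^{22}$, which is precisely the case $(i,j) = (2,1)$ of the second assertion.

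The remaining three assertions are obtained symmetrically: comparing the $(1,1)$-blocks of $M^{-1}$ and $TGT$ proves the case $(i,j) = (1,2)$ of the same identity, while repeating the entire argument with the second relation $G^{-1} = TMT$ of Lemma~\ref{1} yields the two corresponding identities for $(M^{ii})^{-1}$ in terms of $S_{G^{jj}}$.

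The main obstacle is purely notational: one must keep careful track of the convention that $(G^{ii})^{-1}$ corresponds to the Schur complement $S_{M^{jj}}$ of the \emph{opposite} block ($j\neq i$), which is a direct consequence of the block inversion formula. Once the block form of $TGT$ is written out explicitly and matched against the Schur-complement block form of $M^{-1}$, the correspondence is forced and the identities follow without further computation.
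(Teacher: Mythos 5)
Your proposal is correct and follows essentially the same route as the paper: both identify the diagonal blocks of $M^{-1}$ and $G^{-1}$ with inverses of the opposite Schur complements via the standard block-inversion formula, and then match these against the block-diagonal products $TGT$ and $TMT$ from Lemma~\ref{1}. If anything, your write-up makes the final block-comparison step more explicit than the paper's, which simply asserts the conclusion after displaying the two block forms.
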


\begin{proof}
\noindent It is obvious that $M, G$ are symmetric and $M^{ii}, G^{ii}$ are
invertible. Since the inverse of Schur complement of $M^{11}$ in $M$ is the sub-matrix of $M^{-1}$, we have
\footnotesize{
\[
\begin{array}{c}
M^{-1}=\left[
\begin{array}{ccc}
\left( M^{11}\right) ^{-1}+\left( M^{11}\right) ^{-1}M^{12}{(S_{M^{11}})^{-1}}M^{21}\left( M^{11}\right) ^{-1}& &-\left( M^{11}\right) ^{-1}M^{12}{(S_{M^{11}})^{-1}} \\
& &  \\
-{(S_{M^{11}})^{-1}}M^{21}\left( M^{11}\right) ^{-1}& &{\displaystyle(S_{M^{11}})^{-1}}\end{array}\right]  \end{array}\].}

\noindent \normalsize Similarly for the Schur complement of $M^{22}$, we obtain

\footnotesize{\[
\begin{array}{c}
M^{-1}=\left[
\begin{array}{ccc}
{\displaystyle(S_{M^{22}})^{-1}} &  & -{(S_{M^{22}})^{-1}}M^{12}\left(
M^{22}\right) ^{-1} \\
&  &  \\
-\left( M^{22}\right) ^{-1}M^{21}{(S_{M^{22}})^{-1}} &  & \left(
M^{22}\right) ^{-1}+\left( M^{22}\right) ^{-1}M^{21}{(S_{M^{22}})^{-1}}M^{12}\left( M^{22}\right) ^{-1}\end{array}\right]  \end{array}\]}

\noindent \normalsize Then we have \\

$\begin{array}{ccc}
M^{-1}=\left[
\begin{array}{cc}
{\displaystyle(S_{M^{22}})^{-1}} & -\left( M^{11}\right) ^{-1}M^{12}{(S_{M^{11}})^{-1}} \\
&  \\
-\left( M^{22}\right) ^{-1}M^{21}{(S_{M^{22}})^{-1}} & {\displaystyle(S_{M^{11}})^{-1}}\end{array}\right]\end{array},$\\

\noindent by the same way, we get \\

$\begin{array}{ccc}
G^{-1}=\left[
\begin{array}{cc}
{\displaystyle(S_{G^{22}})^{-1}} & -\left( G^{11}\right) ^{-1}G^{12}{(S_{G^{11}})^{-1}} \\
&  \\
-\left( G^{22}\right) ^{-1}G^{21}{(S_{G^{22}})^{-1}} & {\displaystyle(S_{G^{11}})^{-1}}\end{array}\right]\end{array}.$\\

\noindent Thus, we obtain the desired results  using Lemma~\ref{1}.
\end{proof}

\section{\textbf{Orthogonal Projection to $k-$plane of $H^{n}$ \\ based on a Hyperbolic $n-$simplex}}

If $p_{1}, p_{2}, ..., p_{n+1}$ are vertices of any hyperbolic $n-$ simplex $\bigtriangleup $, then $\{p_{1}, p_{2}, ..., p_{n+1}\}$ is
a basis of $R_{1}^{n+1}$. Let $W_j$ be a subspace spanned by $\{\footnotesize{p_{1},p_{2},...,\hat p_{j},...,p_{n+1}}\}$, and $e_j$ be the unit outer normal to
$W_j$; $j =1,...,n+1$. Hence $\{e_{1},e_{2},...,e_{n+1}\}$ is
another basis of $R_{1}^{n+1}$.

\vspace{0.2cm}
Let $W^h$ be a $k-$plane which contains a $k-$face with vertices $ p_{1}, p_{2}, ..., p_{k+1}$
of $\bigtriangleup$. Then the set $\{p_{1}, p_{2}, ..., p_{k+1}\}$ is
a basis of the $(k+1)-$dimensional subspace of $W$ in $
R_{1}^{n+1}$. Since $p_{1}, p_{2}, ..., p_{k+1}$ are vertices of
$\bigtriangleup$, the subset $\{p_{1}, p_{2}, ..., p_{k+1}\}$ of  $R_{1}^{n+1}$  can be extended basis
$\{p_{1}, p_{2}, ..., p_{n+1}\}$  and $\{p_{1}, ..., p_{k+1}, e_{k+2}, ..., e_{n+1}\}$ of  $R_{1}^{n+1}$.
As a consequence, we see that $\{e_{k+2}, ..., e_{n+1}\}$ is a basis of \\$(n-k)-$dimensional subspace $W^{\bot}$.

\begin{thm}\label{th3}
 \noindent Let  $p$ be a point and $W^h$ be a $k-$plane in $H^n$. Then the orthogonal projection
 $\sigma(p)$ of $p$ to $W^h$  is given by
$$
\sigma(p)=\frac{p+\displaystyle\sum\limits_{{s,t=k+2}}^{n+1}{\frac{\sqrt{M_{ss}M_{tt}}
m_{t}^{s} \langle p,e_t\rangle {e_s}} {|M|m^{k+1}}}} {\sqrt{1-
\displaystyle\sum\limits_{{s,t=k+2}}^{n+1}
{\displaystyle\frac{\sqrt{M_{ss}M_{tt}} m_{t}^{s} \langle p,{e_t} \rangle
\langle p,{e_s}\rangle }{|M|m^{k+1}}}}}.
$$
\end{thm}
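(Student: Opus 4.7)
The plan is to decompose $p$ orthogonally with respect to $V\oplus V^{\perp}=R_{1}^{n+1}$, where $V=\mathrm{span}\{p_{1},\dots,p_{k+1}\}$ is the $(k+1)$-dimensional time-like subspace with $V\cap H^{n}=W^{h}$, and then rescale the time-like component so it lies on $H^{n}$. From the discussion preceding the theorem, $\{e_{k+2},\dots,e_{n+1}\}$ is a basis of the space-like complement $V^{\perp}$, because $\langle e_{j},p_{i}\rangle=0$ whenever $i\neq j$. Write $p=q+r$ with $q\in V$ and $r=\sum_{s=k+2}^{n+1}c_{s}\,e_{s}\in V^{\perp}$. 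The perpendicular geodesic from $p$ to $W^{h}$ lies in the $2$-plane through the origin spanned by $q$ and $r$, and the standard parametrization of hyperbolic geodesics gives $\sigma(p)=q/\sqrt{-\langle q,q\rangle}$. So it suffices to express the coefficients $c_{s}$ and the number $\langle q,q\rangle$ in terms of the data $\langle p,e_{t}\rangle$.

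Taking inner products of $p=q+r$ with $e_{t}$ and using $\langle q,e_{t}\rangle=0$ yields $\langle p,e_{t}\rangle=(G^{22}c)_{t}$, so $c=(G^{22})^{-1}b$ with $b_{t}=\langle p,e_{t}\rangle$. By Lemma~\ref{2}, $(G^{22})^{-1}=T^{22}\,S_{M^{11}}\,T^{22}$, and the block (Schur) determinant identity applied to the $(k+2)\times(k+2)$ bordered minors of $M$ gives $(S_{M^{11}})_{st}=m_{t}^{s}/m^{k+1}$. Since $T^{22}_{ss}=\sqrt{M_{ss}/(\epsilon|M|)}$ and $\epsilon=-1$ in the hyperbolic case,
\[
c_{s}=-\sum_{t=k+2}^{n+1}\frac{\sqrt{M_{ss}M_{tt}}\,m_{t}^{s}}{|M|\,m^{k+1}}\,\langle p,e_{t}\rangle,
\]
so that $q=p-r$ coincides with the numerator in the statement.

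For the denominator, $\langle p,p\rangle=-1$ together with the algebraic identity $\langle r,r\rangle=c^{\mathrm{T}}G^{22}c=b^{\mathrm{T}}(G^{22})^{-1}b=\langle p,r\rangle$ gives $\langle q,q\rangle=-1-\langle p,r\rangle$. Substituting the formula for $c_{s}$ into $\langle p,r\rangle=\sum_{s}c_{s}\langle p,e_{s}\rangle$ produces
\[
-\langle q,q\rangle=1-\sum_{s,t=k+2}^{n+1}\frac{\sqrt{M_{ss}M_{tt}}\,m_{t}^{s}\,\langle p,e_{s}\rangle\langle p,e_{t}\rangle}{|M|\,m^{k+1}},
\]
and dividing $q$ by the square root of this quantity yields the claimed formula. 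The main obstacle will be the careful verification of the Schur identity $(S_{M^{11}})_{st}=m_{t}^{s}/m^{k+1}$ and the tracking of the sign from $\epsilon=-1$, which is precisely what converts the $+$ naturally appearing in $-1-\langle p,r\rangle$ into the $-$ appearing under the radical in the denominator.
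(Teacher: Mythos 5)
Your proposal is correct and follows essentially the same route as the paper: your decomposition $p=q+r$ with $r=\sum c_s e_s\in V^{\perp}$ is exactly the paper's $\dot p=p+\sum\lambda_s e_s$ with $\lambda_s=-c_s$, and both arguments obtain the coefficients from $(G^{22})^{-1}$ via Lemma~\ref{2} together with the Schur/Brualdi--Schneider minor identity, then rescale the $V$-component onto $H^n$. Your explicit computation $-\langle q,q\rangle=1+\langle p,r\rangle$ is a slightly more transparent justification of the normalizing constant than the paper's bare assertion, but it is the same argument.
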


\begin{proof}
\noindent For a point $p\in H^n$, by \cite[ Theorem 3.11] {Vi},
there is a point $\dot p \in W$ such that $\vec{p\dot p}\in
W^{\bot }$. Therefore, we can write

\begin{equation}
 \overrightarrow{p\dot p}=\sum\limits_{{s=k+2}}^{n+1}\lambda _s e_s.
\end{equation}

\noindent Then, we have
$ -\langle p,{e_t}\rangle=\displaystyle\sum\limits_{{s=k+2}}^{n+1}\lambda _s \langle
{e_s} ,{e_t}\rangle$, ~~~$t=k+2, ..., n+1$.

\noindent Taking
$$
G^{22}=G\left(\begin{array}{cccc}k+2&...& n+1 \\ k+2&...&n+1\end{array}\right)
=\left[\langle
{e_s} ,{e_t}\rangle \right]_{s,t=k+2,...,n+1},  ~~L=\left[\lambda _{k+2}
... \lambda_{n+1} \right],
$$
\noindent we obtain

\begin{equation}
 L= -\left(G^{22}\right)^{-1}\left[\langle p,{e_t}\rangle \right].
\end{equation}

\noindent By Lemma~\ref{2} and the equation (5) of \cite{BS}, we
see that
$$
\left(G^{22}\right)^{-1}=\left[\displaystyle\frac{\sqrt{M_{ii}
M_{jj}}m_{j}^{i}}{-|M|m^{k+1}}\right]_{i,j=k+2,...,n+1},
$$
\noindent and this implies

 \begin{equation}
\lambda _s
=\displaystyle\sum\limits_{{t=k+2}}^{n+1}\frac{\sqrt{M_{ss}M_{tt}}
m_{t}^{s} \langle p,{e_t}\rangle}{|M|m^{k+1}},~~~s=k+2,...,n+1.
\end{equation}

 \noindent Substituting (3) into (1), we obtain

\begin{equation}
\dot p=
p+\displaystyle\sum\limits_{{s,t=k+2}}^{n+1}{\displaystyle\frac{\sqrt{M_{ss}M_{tt}}
m_{t}^{s} \langle p,{e_t}\rangle {e_s}}{|M|m^{k+1}}}.
\end{equation}

\noindent By \cite{Vi}, there exists a unique $\sigma(p)\in W^h$ such that
$\sigma(p)=c\dot p$. Since  $\dot p$ is the orthogonal projection
of $p$ to $W$, we have
$$
c=\displaystyle\frac{1}{\sqrt{1-\displaystyle
\sum\limits_{{s,t=k+2}}^{n+1}
{\displaystyle\frac{\sqrt{M_{ss}M_{tt}} m_{t}^{s} \langle p,{e_t}\rangle
\langle p,{e_s}\rangle }{|M|m^{k+1}}}}}
$$
\noindent which completes the proof.
\end{proof}

In case of the orthogonal projection to a hyperplane, we obtain $m_{j}^{j}=|M|$ and
$m^{k+1}=M_{jj}$. Substituting these equalities into the statement
of Theorem~\ref{th3}, we reach  the result of \cite[Theorem 4.1 ]
{IT} and \cite[Proposition 2.2] {Us}, as follows:

$$
\sigma(p)=\displaystyle \frac{p- \langle p,{e_j}\rangle e_j} {\sqrt{1+ \langle p,{e_j}\rangle^2
}}, ~~j=1,...,n+1
$$
\noindent where $e_j$ is the unit normal of ${W_j}$ in $R_1^{n+1}$. This result is also a generalization of Theorem \ref{th3} \cite{{IT}, {Us}}.

\begin{thm}\label{th5}
\noindent Let  $p$ be a point and $W^h$  be a $k-$plane in $H^n$. Then,
$$
\cosh \xi (p,W^h) =\displaystyle{\sqrt{1-
\sum\limits_{{s,t=k+2}}^{n+1}
{\displaystyle\frac{\sqrt{M_{ss}M_{tt}} m_{t}^{s} \langle p,{e_t}\rangle
\langle p,{e_s}\rangle }{|M|m^{k+1}}}}}.
$$

\noindent where  $\xi (p, W^h)$ is the distance between $p$ and
$W^h$.
\end{thm}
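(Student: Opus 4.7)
The plan is to reduce the computation to a direct evaluation of $-\langle p,\sigma(p)\rangle$ using the hyperbolic distance formula recalled in the introduction. Since $d_H(q_1,q_2)=\mathrm{arccosh}(-\langle q_1,q_2\rangle)$ for $q_1,q_2\in H^n$, and $\sigma(p)\in W^h$ is by definition the perpendicular foot of $p$ on $W^h$, one has $\xi(p,W^h)=d_H(p,\sigma(p))$, so that $\cosh\xi(p,W^h)=-\langle p,\sigma(p)\rangle$. The task is therefore to show that this scalar product equals the square root on the right-hand side of the statement.

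First I would invoke Theorem~\ref{th3}, which expresses $\sigma(p)=c\dot p$ with both $\dot p$ and $c$ explicit. Writing $A$ for the double sum appearing under the radical in the present theorem, the formula for $\dot p$ obtained in the proof of Theorem~\ref{th3}, together with the identity $\langle p,p\rangle=-1$ valid for $p\in H^n$, yields
$$-\langle p,\dot p\rangle \;=\; 1 \;-\; \sum_{s,t=k+2}^{n+1}\frac{\sqrt{M_{ss}M_{tt}}\,m_t^s\,\langle p,e_t\rangle \langle p,e_s\rangle}{|M|\,m^{k+1}} \;=\; 1-A,$$
simply by distributing the inner product across the sum defining $\dot p$ and recognising the coefficients $\lambda_s$ of the orthogonal decomposition.

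Multiplying by the scalar $c=1/\sqrt{1-A}$ from Theorem~\ref{th3} then gives
$$\cosh\xi(p,W^h) \;=\; -\langle p,\sigma(p)\rangle \;=\; -c\,\langle p,\dot p\rangle \;=\; \frac{1-A}{\sqrt{1-A}} \;=\; \sqrt{1-A},$$
which is precisely the claimed formula. The main obstacle is purely bookkeeping: I must track the Lorentzian signs carefully, using that each $e_s$ is a space-like unit vector normal to $W^h$ while $p$ is time-like with $\langle p,p\rangle=-1$, so that the two minus signs combine correctly. No new algebraic identity is needed beyond the inversion of $G^{22}$ already produced in the proof of Theorem~\ref{th3} via Lemma~\ref{2}.
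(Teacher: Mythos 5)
Your proposal is correct and is essentially the paper's own argument: the authors also prove the theorem in one line by observing that $\cosh\xi(p,W^h)=-\langle p,\sigma(p)\rangle$ and then reading off the value from the formula for $\sigma(p)$ in Theorem~\ref{th3}. Your version merely spells out the intermediate computation $-\langle p,\dot p\rangle=1-A$ and the multiplication by $c=1/\sqrt{1-A}$, which the paper leaves implicit.
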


\begin{proof}
\noindent Since $\langle p, \sigma (p)\rangle=-\cosh \xi (p, W^h) $, the result
follows Theorem~\ref{th3}.
\end{proof}

As an immediate consequence of
Theorem~\ref{th5}, we obtain the following known
result\cite[Section 4] {Vi}.

\begin{cor}
\noindent Let $p$ be a point and ${W_j^h}$ be a hyperplane of $H^n$ determined by $e_j$. Then the distance
$\xi(p,{W_j^h})$ between $p$ and ${W_j^h}$ is given by

$$\cosh\xi(p,{W_j^h})=\sqrt{1+ \langle p,{e_j}\rangle^2 }.$$
\end{cor}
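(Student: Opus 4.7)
The plan is to derive the corollary as a direct specialization of Theorem~\ref{th5} to the hyperplane case $k=n-1$. By relabeling the vertices of $\bigtriangleup$ if necessary, I may assume $W_j^h$ is the face $\bigtriangleup_{n+1}$ opposite $p_{n+1}$, so $j=n+1$ and $e_j=e_{n+1}$ is the single outer normal determining the hyperplane; the $k$-face spanned by $p_1,\dots,p_n$ then lies in $W_j^h$ and Theorem~\ref{th5} applies.

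With $k=n-1$, the index range $\{k+2,\dots,n+1\}$ reduces to the single value $n+1$, so the double sum in the formula collapses to the single term $s=t=j$. The next step is to evaluate the two surviving submatrix determinants. The remark following Theorem~\ref{th3} records that in this case $m_j^j=|M|$ and $m^{k+1}=M_{jj}$; both follow by direct inspection of the definitions, since $m_{n+1}^{n+1}$ is the determinant of the full edge matrix $M$ itself, and $m^{n}$ is the leading $n\times n$ principal minor of $M$, which equals the cofactor $M_{n+1,n+1}$.

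Plugging these into the single-term formula, the factors of $|M|$ cancel and the expression reduces to $\sqrt{1-(\sqrt{M_{jj}^{2}}/M_{jj})\,\langle p,e_j\rangle^2}$. The only nontrivial step is tracking the sign of $M_{jj}$: it is the Gram determinant, with respect to the Lorentzian product, of the time-like vectors $p_1,\dots,\hat p_j,\dots,p_{n+1}$, which span a time-like $n$-dimensional subspace of $R_1^{n+1}$ of signature $(-,+,\dots,+)$. Hence $M_{jj}<0$ and $\sqrt{M_{jj}^{2}}/M_{jj}=-1$, which flips the sign under the radical from minus to plus and yields $\cosh\xi(p,W_j^h)=\sqrt{1+\langle p,e_j\rangle^2}$, as required. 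The main obstacle is precisely this sign bookkeeping on $M_{jj}$; once the Lorentzian signature is read off correctly, everything else is routine index collapsing in the sum of Theorem~\ref{th5}.
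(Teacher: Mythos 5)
Your proof is correct and follows essentially the same route as the paper: specialize Theorem~\ref{th5} to $k=n-1$ using the identifications $m_j^j=|M|$ and $m^{k+1}=M_{jj}$ recorded after Theorem~\ref{th3}, so the double sum collapses to the single term $s=t=j$. The only addition is that you make explicit the sign argument the paper leaves implicit, namely that $M_{jj}<0$ because it is the Lorentzian Gram determinant of a basis of the time-like subspace $W_j$, which is exactly what turns $1-\langle p,e_j\rangle^2\cdot\sqrt{M_{jj}^2}/M_{jj}$ into $1+\langle p,e_j\rangle^2$.
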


By taking $p_j$ instead of $ p$
in (4) and using $\langle p_j,{e_t}\rangle=-\sqrt{\displaystyle\frac{|M|}{
M_{tt}}}\delta_{jt},$

\noindent we obtain
$$
 \dot
p_j=p_j+\displaystyle\sum\limits_{{s=k+2}}^{n+1}\sqrt{\displaystyle\frac{M_{ss}}{|M|}}\displaystyle\frac{m_{j}^{s}}{m^{k+1}} e_s
$$
and
\noindent
$$
\noindent
 \langle \dot p_j,\dot p_j\rangle =-1-\displaystyle\frac{m_{j}^{j}}{m^{k+1}}
$$

\noindent where $p_j$ is a vertex of $\bigtriangleup$. The proof of  following corollary is obvious from
Theorem~\ref{th3}.

\begin{cor}
\noindent Let $\bigtriangleup$ be a hyperbolic simplex with
vertices $p_{1},...,p_{n+1}$. Then the perpendicular foot from
$p_j$ to $k-$face $W^h$ is
given by
$$
\sigma(p_j)=\frac{p_j+\displaystyle\sum\limits_{{s=k+2}}^{n+1}
\sqrt{\frac{M_{ss} } { |M| }}{\displaystyle\frac{~m_{j}^{s}}{m^{k+1}}}~{e_s}}
{\sqrt{1+\displaystyle\frac{m_{j}^{j}}{m^{k+1}}}}, ~~~j=k+2,...,n+1,
$$
where $p_{1},...,p _{k+1}$ are vertices of $k-$face $W^h$.
\end{cor}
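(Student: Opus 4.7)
The plan is to derive the corollary as a direct specialization of Theorem~\ref{th3} to a vertex $p_j$ with $j \in \{k+2,\dots,n+1\}$. The key input is the orthogonality relation $\langle p_j,e_t\rangle = -\sqrt{|M|/M_{tt}}\,\delta_{jt}$, which holds because $e_t$ is the unit outer normal to the face $\bigtriangleup_t$ (so $e_t \perp p_l$ for every vertex $l \neq t$) and the normalization constant is read off directly from the formula $e_t = -\epsilon \sum_l M_{tl} p_l / \sqrt{M_{tt}|M|}$ together with $\langle e_t,e_t\rangle = 1$.

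First I would substitute $p = p_j$ into equation~(4) of the proof of Theorem~\ref{th3}. The Kronecker delta collapses the inner $t$-sum to the single term $t=j$, and the factor $\sqrt{M_{jj}}$ in the numerator cancels against the $\sqrt{M_{jj}}$ appearing through $\sqrt{|M|/M_{jj}}$; what is left recombines to give exactly
\[
\dot p_j = p_j + \sum_{s=k+2}^{n+1} \sqrt{\frac{M_{ss}}{|M|}}\,\frac{m_j^s}{m^{k+1}}\, e_s,
\]
as already recorded in the excerpt right before the corollary statement. Then I would compute $\langle \dot p_j,\dot p_j\rangle$. Using $\langle p_j,p_j\rangle = -1$ and the orthogonality relation again, the cross terms with $p_j$ only contribute when $s=j$, while the pure $\langle e_s,e_t\rangle$ double sum is simplified by recognizing $\langle e_s,e_t\rangle$ as the $(s,t)$ entry of $G^{22}$ and invoking Lemma~\ref{2} (and equation~(5) of \cite{BS}) to express $(G^{22})^{-1}$ in terms of the minors $m_t^s$ and $m^{k+1}$. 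After cancellation, only the diagonal term survives and one obtains $\langle \dot p_j,\dot p_j\rangle = -1 - m_j^j/m^{k+1}$.

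Finally, since $\sigma(p_j) \in H^n$ lies on the geodesic through the origin and $\dot p_j$, I would write $\sigma(p_j) = c\,\dot p_j$ and determine $c$ from the hyperbolic normalization $\langle \sigma(p_j),\sigma(p_j)\rangle = -1$, yielding
\[
c = \frac{1}{\sqrt{-\langle \dot p_j,\dot p_j\rangle}} = \frac{1}{\sqrt{1 + m_j^j/m^{k+1}}},
\]
which combined with the formula for $\dot p_j$ above gives the stated expression for $\sigma(p_j)$. The main obstacle is the compact evaluation of $\langle \dot p_j,\dot p_j\rangle$: accounting for the Schur-complement identities from Lemma~\ref{2} carefully enough to reduce the double sum with $\langle e_s,e_t\rangle$ to the single ratio $m_j^j/m^{k+1}$ is the only non-mechanical step, and everything else is algebraic substitution.
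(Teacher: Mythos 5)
Your proposal is correct and follows essentially the same route as the paper: the authors likewise substitute $p=p_j$ into equation (4), use $\langle p_j,e_t\rangle=-\sqrt{|M|/M_{tt}}\,\delta_{jt}$ to collapse the sum, record $\langle\dot p_j,\dot p_j\rangle=-1-m_j^j/m^{k+1}$, and normalize as in the last step of Theorem~\ref{th3}. You merely fill in the expansion of $\langle\dot p_j,\dot p_j\rangle$ that the paper leaves implicit.
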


 \noindent\vspace{0.5cm} If we replace $p$ by $ p_j$ and use
 $\langle {p_j},{e_t}\rangle=-\sqrt{\displaystyle\frac{ \left|M\right| }{
M_{tt}}}\delta_{jt}$, we see that
$$
\langle \sigma(p_j), {p_j}\rangle=
-\sqrt{1+\displaystyle\frac{m_{j}^{j}}{m^{k+1}}}.
$$
 \noindent If we consider the last equation in the proof of Theorem~\ref{th3}, we see that $
\cosh \xi (p_j,W^h) =\displaystyle\sqrt{1+ \displaystyle\frac{
m_{j}^{j}}{m^{k+1}}},
$ that result is a generalization of \cite[Proposition 4]
{DMP} to the $k-$face $W^h$ of a hyperbolic $n-$simplex. Since $\displaystyle\frac{~m_{j}^{j}}{m^{k+1}}$ is the
diagonal $jjth-$entry of $ \displaystyle
S_{M^{11}}=\left[a_{ij}\right]$, the altitude from $p_j$ to $k-$face $W^h$ with vertices $p_{1},...,p_{k+1}$ is given by

$$
\cosh \xi (p_j,W^h) =\displaystyle\sqrt{1+ a_{jj}}
$$
\noindent where  $\xi (p_j, W^h)$ is the distance between $p_j$ and
$k-$ face $W^h$.

\vspace{0.2cm}
 By $\dot p_j = p_j+\displaystyle \sqrt{\frac{|M|
}{M_{jj}}}~{e_j}$, for $(n-1)-$face $W_j^h$, we have
the following corollary.

\begin{cor}
\noindent Let $\bigtriangleup $ be a hyperbolic simplex with
vertices $p_{1},...,p_{n+1}$. Then the perpendicular foot from
$p_j$ to $(n-1)-$face $W_j^h$ is given by
$$
\sigma(p_j)=\frac{p_j+\displaystyle \sqrt{\frac{|M|
}{M_{jj}}}~{e_j}} {\sqrt{1+ \displaystyle\frac{|M|
}{M_{jj}}}}, ~~~j=1,...,n+1
$$

where $p_{1},...,\hat p_j,...,p_{n+1}$ are vertices of $W_j^h$.
\end{cor}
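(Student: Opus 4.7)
The plan is to specialize the preceding corollary to the case $k = n-1$, where the chosen $k$-face is the $(n-1)$-face $W_j^h$ opposite to the vertex $p_j$. In that setting the orthogonal complement $W^{\bot}$ in $R_1^{n+1}$ is one-dimensional and spanned by the single outer unit normal $e_j$, so the double sum over $s,t = k+2,\ldots,n+1$ appearing in Theorem~\ref{th3} collapses to a single term; equivalently, the identities $m_j^j = |M|$ and $m^{k+1} = M_{jj}$ already noted in the hyperplane remark reduce the prior corollary directly to the stated formula. I will, however, give the short self-contained derivation outlined below, which is exactly the one already foreshadowed in the excerpt.

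First I would recover the orthogonal projection $\dot p_j$ of $p_j$ onto the time-like hyperplane $W$ supporting $W_j^h$. Writing $\overrightarrow{p_j \dot p_j} = \lambda e_j$ and enforcing $\langle \dot p_j, e_j\rangle = 0$ yields $\lambda = -\langle p_j, e_j\rangle$, and the identity $\langle p_j, e_j\rangle = -\sqrt{|M|/M_{jj}}$ recorded just above the corollary gives $\lambda = \sqrt{|M|/M_{jj}}$. This reproduces the formula $\dot p_j = p_j + \sqrt{|M|/M_{jj}}\,e_j$ already displayed in the excerpt. To obtain $\sigma(p_j)$ I would then renormalize $\dot p_j$ so that it lies on $H^n$: as in the final step of the proof of Theorem~\ref{th3}, the unique point of $W_j^h = W \cap H^n$ on the ray through $\dot p_j$ is $\dot p_j / \sqrt{-\langle \dot p_j, \dot p_j\rangle}$. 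A one-line bilinear expansion using $\langle p_j, p_j\rangle = -1$, $\langle e_j, e_j\rangle = 1$, and the value of $\langle p_j, e_j\rangle$ above gives $\langle \dot p_j, \dot p_j\rangle = -1 - |M|/M_{jj}$, and substitution delivers the claimed formula.

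No step of this plan is genuinely difficult; the main thing to watch is the sign of $\langle p_j, e_j\rangle$, which must be $-\sqrt{|M|/M_{jj}}$ rather than $+\sqrt{|M|/M_{jj}}$ because $e_j$ is the \emph{outer} unit normal of $\bigtriangleup_j$ while $p_j$ sits on the opposite side of $W$ in $H^n$. That sign is what makes $\lambda$ positive, orients $e_j$ correctly relative to the geodesic from $p_j$ to its perpendicular foot, and ultimately produces the denominator $\sqrt{1 + |M|/M_{jj}}$ in the stated expression.
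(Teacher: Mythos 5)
Your proposal is correct and follows essentially the same route as the paper: the paper obtains this corollary by specializing $\dot p_j = p_j + \sqrt{|M|/M_{jj}}\,e_j$ (derived from equation (4) with $p = p_j$ and $\langle p_j,e_t\rangle = -\sqrt{|M|/M_{tt}}\,\delta_{jt}$, using $m_j^j=|M|$, $m^{k+1}=M_{jj}$) and normalizing by $\sqrt{-\langle\dot p_j,\dot p_j\rangle}=\sqrt{1+|M|/M_{jj}}$. Your direct one-dimensional computation of $\lambda$ is just a self-contained restatement of that same collapse of the double sum, with the same sign bookkeeping for $\langle p_j,e_j\rangle$.
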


\vspace{0.2cm}
Using $ G_{jj}=\displaystyle\frac{-|G|
M_{jj}}{|M|}$ for $j=1,...,n+1$, we obtain the following
known result\cite[Proposition 4] {DMP}.

\begin{cor} Let $\bigtriangleup $ be a hyperbolic simplex with
vertices $p_{1},...,p_{n+1}$. Then the altitude $\xi (p_j, W_j^h)$ from $p_j$ to
$(n-1)-$face $W_j^h$  is given by
$$
\cosh \xi (p_j, W_j^h) =\displaystyle{\sqrt{1+
\displaystyle\frac{|M|}{M_{jj}}}}, ~~~j=1,...,n+1
$$

\noindent where $p_{1},...,\hat
p_{j},...,p_{n+1}$ are vertices of $W_j^h$.
\end{cor}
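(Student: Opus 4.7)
The plan is to view the statement as a direct specialization of results already established in the excerpt, so the proof amounts to bookkeeping rather than genuine computation. The shortest route uses the hyperplane distance formula $\cosh \xi(p, W_j^h) = \sqrt{1 + \langle p, e_j\rangle^2}$ obtained as a corollary of Theorem~\ref{th5}, valid for any $p \in H^n$ and any hyperplane $W_j^h$ with unit outer normal $e_j$. I would substitute $p = p_j$ and invoke the relation $\langle p_j, e_t\rangle = -\sqrt{|M|/M_{tt}}\,\delta_{jt}$ already recorded in the discussion preceding the penultimate corollary; the case $t = j$ then yields $\langle p_j, e_j\rangle^2 = |M|/M_{jj}$, and the claimed identity follows in one step.

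A parallel route would start from the projection expression
$$\sigma(p_j) = \frac{p_j + \sqrt{|M|/M_{jj}}\,e_j}{\sqrt{1 + |M|/M_{jj}}}$$
supplied by the corollary immediately preceding this one, compute $\langle p_j, \sigma(p_j)\rangle$ using $\langle p_j, p_j\rangle = -1$ together with the value of $\langle p_j, e_j\rangle$, and then apply the identity $\cosh \xi(p_j, W_j^h) = -\langle p_j, \sigma(p_j)\rangle$ (as used in the proof of Theorem~\ref{th5}). Either pathway reaches the conclusion after a single line of algebra.

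The main thing to watch is the sign bookkeeping of the Lorentzian product: one must verify that $|M|/M_{jj} \ge 0$ so that the square root is real, which holds under the paper's standing convention that $M_{jj}|M| > 0$ (as is already implicit in the defining formula for $e_j$). No deeper obstacle arises. The identity $G_{jj} = -|G|M_{jj}/|M|$ flagged in the paragraph just above is not needed for the $M$-matrix derivation itself; rather, it is the bridge that recasts the result in Gram-matrix form and reproduces Proposition~4 of \cite{DMP}, which is precisely why the authors recall it here.
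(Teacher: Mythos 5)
Your proposal is correct and matches the paper's route: the corollary is obtained by pure specialization of the already-established distance formulas, using $\langle p_j,e_j\rangle=-\sqrt{|M|/M_{jj}}$ (equivalently, the hyperplane case $m_j^j=|M|$, $m^{k+1}=M_{jj}$ of the general altitude formula $\cosh\xi(p_j,W^h)=\sqrt{1+m_j^j/m^{k+1}}$), and your reading of the identity $G_{jj}=-|G|M_{jj}/|M|$ as merely the bridge to the Gram-matrix form of \cite{DMP} is also how the paper uses it. Both of your one-line routes are valid and equivalent to the paper's.
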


\section{\textbf{Orthogonal Projections to a $k-$plane of $S^{n} $ \\ based on a Spherical $n-$simplex }}

Let $\bigtriangleup $ be  with vertices
$p_{1},...,p_{n+1}$. Then $\{p_{1},...,p_{n+1}\}$ is  a basis of
$R^{n+1}$. If $W_j$ is the subspace spanned by
 $\{p_{1},...,\hat p_{j},...,p_{n+1}\}$, then $\{e_{1},...,e_{n+1}\}$ is  another basis of $R^{n+1}$ where $e_j$  is the unit outer normal to
$W_j$  for $j=1,...,n+1$.

\vspace{0.2cm}
\noindent Let $W^s$ be a $k-$plane which contains a $k-$face with vertices
$p_{1},p_{2},...,p_{k+1}$. Then the  set
$\{p_{1},p_{2},...,p_{k+1}\}$  is a basis of the $(k+1)-$dimensional subspace $W$ in $R^{n+1}$.
As a consequence, we have a basis $\{e_{k+2}...,e_{n+1}\}$ of $(n-k)-$dimensional
subspace $ W^{\bot }$.

\begin{thm}\label{th23}
 \noindent Let  $p $ be a point and $W^s$ be a $k-$plane in $S^n$. Then the orthogonal projection
 $\sigma(p)$ of $p$ to $W^s$  is given by
$$
\sigma(p)=\frac{p-\displaystyle\sum\limits_{{s,t=k+2}}^{n+1}{\frac{\sqrt{M_{ss}M_{tt}}
m_{t}^{s} {\langle p,{e_t}\rangle} _E{e_s}} {|M|m^{k+1}}}} {\sqrt{1-
\displaystyle\sum\limits_{{s,t=k+2}}^{n+1}
{\displaystyle\frac{\sqrt{M_{ss}M_{tt}} m_{t}^{s} {\langle p,{e_t}\rangle}_E
{\langle p,{e_s}\rangle} _E}{|M|m^{k+1}}}}}.
$$
\end{thm}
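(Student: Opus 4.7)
The strategy closely mirrors that of Theorem~\ref{th3}, replacing the Lorentz product $\langle\,,\,\rangle$ by the Euclidean product $\langle\,,\,\rangle_E$ and tracking the sign change induced by the curvature $\epsilon=+1$ in $S^n$. First I would produce the (Euclidean) orthogonal projection $\dot p$ of $p$ onto the linear span $W$ of the face $\{p_1,\ldots,p_{k+1}\}$ by writing $\dot p - p = \sum_{s=k+2}^{n+1}\mu_s e_s$, which is legitimate since $\{e_{k+2},\ldots,e_{n+1}\}$ spans $W^\perp$. Taking $\langle\,\cdot\,,e_t\rangle_E$ and using $\langle\dot p,e_t\rangle_E=0$ for $t\geq k+2$ yields the linear system $-\langle p,e_t\rangle_E = \sum_s \mu_s\langle e_s,e_t\rangle_E$, equivalently $L = -(G^{22})^{-1}\bigl[\langle p,e_t\rangle_E\bigr]$.

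Next I would apply Lemma~\ref{2} with $\epsilon=+1$, which gives $((G^{22})^{-1})_{st} = \dfrac{\sqrt{M_{ss}M_{tt}}\,m_t^s}{|M|\,m^{k+1}}$; the denominator sign is opposite to the hyperbolic setting because $\epsilon$ flips. Hence $\mu_s = -\sum_t \dfrac{\sqrt{M_{ss}M_{tt}}\,m_t^s\,\langle p,e_t\rangle_E}{|M|\,m^{k+1}}$, yielding $\dot p = p - \sum_{s,t}\dfrac{\sqrt{M_{ss}M_{tt}}\,m_t^s\,\langle p,e_t\rangle_E\,e_s}{|M|\,m^{k+1}}$, which already matches the numerator of the claimed formula, including the minus sign.

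Finally, since $\sigma(p)$ must lie on the great circle from $p$ that meets $W^s$ orthogonally and must satisfy $\langle\sigma(p),\sigma(p)\rangle_E=1$, I would set $\sigma(p)=c\,\dot p$ with $c=1/\sqrt{\langle\dot p,\dot p\rangle_E}$. Expanding and using $\langle p,p\rangle_E=1$ together with the constraint $-\langle p,e_s\rangle_E=\sum_t \mu_t\langle e_t,e_s\rangle_E$, the quadratic term $\sum_{s,t}\mu_s\mu_t\langle e_s,e_t\rangle_E$ collapses to $-\sum_s\mu_s\langle p,e_s\rangle_E$, so $\langle\dot p,\dot p\rangle_E = 1+\sum_s\mu_s\langle p,e_s\rangle_E = 1 - \sum_{s,t}\dfrac{\sqrt{M_{ss}M_{tt}}\,m_t^s\,\langle p,e_t\rangle_E\langle p,e_s\rangle_E}{|M|\,m^{k+1}}$, which is exactly the required denominator.

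The only real difficulty is sign bookkeeping: confirming that the $\epsilon=+1$ input to Lemma~\ref{2} flips the sign of $(G^{22})^{-1}$ relative to the hyperbolic case (accounting for the minus in the numerator of $\sigma(p)$) and that the cancellation in $\langle\dot p,\dot p\rangle_E$ still produces $1-\sum(\cdots)$ on the sphere rather than $1+\sum(\cdots)$. Once these are verified, the rest is the same piece of linear algebra already used in the hyperbolic proof.
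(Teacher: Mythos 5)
Your proposal is correct and follows essentially the same route as the paper: decompose $\overrightarrow{p\dot p}$ over the basis $\{e_{k+2},\ldots,e_{n+1}\}$ of $W^{\perp}$, solve $L=-(G^{22})^{-1}[\langle p,e_t\rangle_E]$ via Lemma~\ref{2} (with the $\epsilon=+1$ sign flip in $(G^{22})^{-1}$ that produces the minus in the numerator), and then rescale $\dot p$ to the sphere. Your explicit computation of $\langle\dot p,\dot p\rangle_E$ via the collapse of the quadratic term is a detail the paper leaves implicit, but it is the same argument.
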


\begin{proof}
By \cite[Theorem 3.11] {Vi}, for $ p\in S^n$, there is a  $\dot p \in W$ such that
$\vec{p\dot p}\in W^{\bot }$. Therefore, we can write

\begin{equation}
 \overrightarrow{p\dot p}=\sum\limits_{{s=k+2}}^{n+1}\lambda _s e_s
\end{equation}

\noindent Then, we have

$ {\langle p,\overrightarrow{p\dot p}\rangle}_E=\sum\limits_{{s=k+2}}^{n+1}\lambda
_s {\langle e_s ,{e_t}\rangle}_E  .$

 \noindent Taking
$$
 G^{22}=G \left(\begin{array}{cccc}k+2&...& n+1 \\ k+2&...&n+1\end{array}\right)
=\left[{\langle {e_s},{e_t}\rangle}_E\right]_{s,t=k+2,...,n+1}, ~~L=\left[\lambda _{k+2}
... \lambda_{n+1} \right],
$$
\noindent we find

\begin{equation}
 L= -\left(G^{22}\right)^{-1}\left[{\langle p,{e_t}\rangle}_E\right].
\end{equation}

\noindent By Lemma~\ref{2} and the equation (5) of \cite{BS}, we see that
$$
\left(G^{22}\right)^{-1}=\left[\displaystyle\frac{\sqrt{M_{ii}
M_{jj}}m_{j}^{i}}{|M|m^{k+1}}\right]_{i,j=k+2,...,n+1}.
$$
\noindent This implies

\begin{equation}
\lambda _s
=-\displaystyle\sum\limits_{{t=k+2}}^{n+1}\frac{\sqrt{M_{ss}M_{tt}}
m_{t}^{s} {\langle p,{e_t}\rangle}_E}{|M|m^{k+1}}, ~~~s=k+2,...,n+1.
\end{equation}

\noindent Substituting (7) into (5), we obtain

\begin{equation}
\dot p=
p-\displaystyle\sum\limits_{{s,t=k+2}}^{n+1}{\displaystyle\frac{\sqrt{M_{ss}M_{tt}}
m_{t}^{s} {\langle p,{e_t}\rangle}_E {e_s}}{|M|m^{k+1}}}.
\end{equation}

\noindent By \cite{Vi}, there exists a unique $\sigma(p)\in W^s $
such that $\sigma(p)=c\dot p$. Since  $\dot p$ is the orthogonal
projection of $p$ to $W$, we have
$$
c = \displaystyle\frac{1}{\sqrt{1-\displaystyle
\sum\limits_{{s,t=k+2}}^{n+1}
{\displaystyle\frac{\sqrt{M_{ss}M_{tt}} m_{t}^{s} {\langle p,{e_t}\rangle}_E
{\langle p,{e_s}\rangle}_E }{|M|m^{k+1}}}}}
$$
\noindent which  completes  the proof.
\end{proof}

By Theorem \ref{th23}, we have

$$
\sigma(p)=\displaystyle \frac{p-{\langle p,{e_j}\rangle} _E e_j} {\sqrt{1-{\langle p,{e_j}\rangle}_E^2
}}
$$

\noindent where $e_j$ is the unit normal of the $W_j$ in $R^{n+1}$.

\begin{thm}\label{th24}
Let  $p$  be a point and $W^s$ be a $k-$plane in $S^n$. Then
$$
\cos\theta (p,W^s) =\displaystyle{\sqrt{1-
\sum\limits_{{s,t=k+2}}^{n+1}
{\displaystyle\frac{\sqrt{M_{ss}M_{tt}} m_{t}^{s} {\langle p,{e_t}\rangle}_E
{\langle p,{e_s}\rangle}_E }{|M|m^{k+1}}}}}.
$$

\noindent where $\theta (p,W^s)$ is the distance between $p$ and $W^s$.
\end{thm}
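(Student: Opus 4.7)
The plan is to mimic the proof of Theorem~\ref{th5} in the hyperbolic case, exploiting the defining relation between the spherical distance and the Euclidean-type inner product. Since $\sigma(p)\in W^{s}$ is the perpendicular foot of $p$ on the $k$-plane $W^{s}$, the spherical distance $\theta(p,W^{s})$ is realized by the geodesic arc from $p$ to $\sigma(p)$, and by the very definition $d_{s}(x,y)=\arccos\langle x,y\rangle_{E}$ we have $\cos\theta(p,W^{s})=\langle p,\sigma(p)\rangle_{E}$. So the whole task reduces to computing that inner product explicitly.

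First I would substitute the explicit formula for $\sigma(p)$ furnished by Theorem~\ref{th23} and distribute the inner product $\langle p,\cdot\rangle_{E}$ over the numerator. The constant denominator
$$
D:=\sqrt{1-\sum_{s,t=k+2}^{n+1}\frac{\sqrt{M_{ss}M_{tt}}\,m_{t}^{s}\,\langle p,e_{t}\rangle_{E}\,\langle p,e_{s}\rangle_{E}}{|M|\,m^{k+1}}}
$$
factors out untouched, and the numerator splits into two pieces: $\langle p,p\rangle_{E}$ coming from the first term, and $-\sum_{s,t}\frac{\sqrt{M_{ss}M_{tt}}\,m_{t}^{s}\,\langle p,e_{t}\rangle_{E}\,\langle p,e_{s}\rangle_{E}}{|M|\,m^{k+1}}$ coming from the rest.

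Second, I would use the fact that $p\in S^{n}$ gives $\langle p,p\rangle_{E}=1$. Combining this with the previous step, the numerator becomes exactly $D^{2}$, so that
$$
\langle p,\sigma(p)\rangle_{E}=\frac{D^{2}}{D}=D,
$$
which is precisely the claimed expression for $\cos\theta(p,W^{s})$.

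There is no serious obstacle here: the proof is essentially a one-line consequence of Theorem~\ref{th23} once one recognises that the square of the denominator appearing in $\sigma(p)$ coincides with $1-\text{(quadratic sum in the }\langle p,e_{t}\rangle_{E}\text{'s)}$. The only subtlety worth checking is that the chosen representative $\sigma(p)\in S^{n}$ (i.e.\ the scaling factor $c$ in Theorem~\ref{th23}) is the one whose spherical distance from $p$ actually equals $\theta(p,W^{s})$, which is guaranteed by the uniqueness of the perpendicular foot cited from \cite{Vi}.
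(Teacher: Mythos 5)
Your proof is correct and is exactly the argument the paper intends: it mirrors the proof of Theorem~\ref{th5}, using $\cos\theta(p,W^s)=\langle p,\sigma(p)\rangle_E$ together with the explicit formula for $\sigma(p)$ from Theorem~\ref{th23}, so that the numerator collapses to the square of the denominator. Nothing further is needed.
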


By taking  $p_j$ instead of $p$ and using $ {\langle {p_j},{e_j}\rangle}_E=-\displaystyle \sqrt{\frac{|M| }{M_{jj}}}$ in (8), we obtain
$$
\dot p_j =
p_j+\displaystyle\sum\limits_{{s=k+2}}^{n+1}{\displaystyle\sqrt{\frac{M_{ss}}{|M| }}} {\displaystyle\frac{m_{j}^{s}}{m^{k+1}}}
{e_s}
$$
\noindent and
$${\langle \dot p_j,\dot p_j\rangle}_E=1-\displaystyle\frac{m_{j}^{j}}{m^{k+1}},~~~{ j=k+2,...,n+1},
$$

\noindent where $p_j$ is a vertex of $\bigtriangleup$. Hence, we have the following corollary.

\begin{cor}
\noindent Let $\bigtriangleup $ be a spherical $n-$simplex with
vertices $p_{1},...,p_{n+1}$, then the  perpendicular foot from
$p_j$ to $k-$face $W^s$  is
given by
$$
\sigma(p_j)=\frac{p_j+\displaystyle\sum\limits_{{s=k+2}}^{n+1}
\sqrt{\frac{M_{ss} }{|M|}}~~{\frac{~m_{j}^{s}}{m^{k+1}}}~{e_s}}
{\displaystyle\sqrt{1-\frac{ m_{j}^{j}}{m^{k+1}}}}, ~~~j=k+2,...,n+1,
$$
where $p_{1},...,p_{k+1}$ are vertices of $W^s$.
\end{cor}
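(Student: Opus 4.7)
The plan is to obtain this corollary as a direct specialization of Theorem~\ref{th23} to the vertex $p=p_j$, exploiting the fact that the orthogonality relation
\[
\langle p_j,e_t\rangle_E = -\sqrt{\frac{|M|}{M_{tt}}}\,\delta_{jt}
\]
(which comes from the definition of $e_t$ as the unit outer normal to the face opposite $p_t$) collapses the double sums in the general formula into single terms. The restriction $j\in\{k+2,\ldots,n+1\}$ is precisely what guarantees $p_j\notin W^s$, so that the projection is nontrivial.

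First I would apply the above $\delta$-identity inside the numerator of the expression for $\sigma(p)$ from Theorem~\ref{th23}. Only the terms with $t=j$ survive, reducing the double sum to a single sum over $s$. The factor $-\sqrt{|M|/M_{jj}}$ coming from $\langle p_j,e_j\rangle_E$ cancels $\sqrt{M_{jj}}$ and flips the overall sign, recovering exactly
\[
\dot p_j \;=\; p_j+\sum_{s=k+2}^{n+1}\sqrt{\frac{M_{ss}}{|M|}}\,\frac{m_j^{s}}{m^{k+1}}\,e_s,
\]
which is the expression already displayed just before the statement of the corollary.

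Next I would apply the same identity in the denominator. Here both $\langle p_j,e_t\rangle_E$ and $\langle p_j,e_s\rangle_E$ must be nonzero, so only the single term $s=t=j$ contributes; its value simplifies (using $\sqrt{M_{jj}^2}=M_{jj}$ and $(|M|/M_{jj})$ from the two inner products) to $m_j^{j}/m^{k+1}$. Hence the denominator reduces to $\sqrt{1-m_j^{j}/m^{k+1}}$, which is exactly $\sqrt{\langle\dot p_j,\dot p_j\rangle_E}$ as computed in the paragraph preceding the corollary. Dividing numerator by denominator yields the claimed formula.

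There is no real obstacle here: the argument is purely a Kronecker-delta bookkeeping exercise on top of Theorem~\ref{th23}, and the two computations of $\dot p_j$ and $\langle\dot p_j,\dot p_j\rangle_E$ in the text just before the corollary already do the heavy lifting. The only point requiring attention is the sign bookkeeping when substituting $\langle p_j,e_j\rangle_E=-\sqrt{|M|/M_{jj}}$ into the numerator, where two negatives combine to give the $+$ sign in the final expression.
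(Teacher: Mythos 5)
Your proof is correct and is essentially the paper's own argument: the authors likewise substitute $p=p_j$ into equation (8) of Theorem~\ref{th23}, use $\langle p_j,e_t\rangle_E=-\sqrt{|M|/M_{tt}}\,\delta_{jt}$ to collapse the double sums to the single term $t=j$ (numerator) and $s=t=j$ (denominator), and normalize by $\sqrt{\langle\dot p_j,\dot p_j\rangle_E}=\sqrt{1-m_j^{j}/m^{k+1}}$. The sign bookkeeping you flag is exactly right, and there are no gaps.
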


\vspace{0.2cm} Let $\theta (p_j,W^s)$  be the
altitude from the vertex $p_j$ to the $k-$face $W^s$ with vertices
$p_{1},...,p_{k+1}$ for $j=k+2,...,n+1$. Then  $\theta (p_j,W^s)$ is given by

$$
\cos\theta (p_j,W^s) =\displaystyle\sqrt{1- \displaystyle\frac{
m_{j}^{j}}{m^{k+1}}}.
$$

\noindent By equality (5) in \cite{BS}, the $jjth-$entry of the Schur
complement $ \displaystyle S_{{M^{11}}}=\left[b_{ij}\right]$
satisfy $ \displaystyle
b_{jj}=\displaystyle\frac{m_{j}^{j}}{m^{k+1}}$.

Let $W_{j}^s $ be  the $(n-1)-$face with vertices $p_{1},...,\hat
p_{j},...,p_{n+1}$ of $\bigtriangleup$. Then, we have
$$
\dot p_j =p_j+\displaystyle \sqrt{\frac{\mid M\mid
}{M_{jj}}}~{e_j},
$$
 \noindent and
$$ {\langle {p_j},{e_j}\rangle}_E=-\displaystyle \sqrt{\frac{\mid M\mid
}{M_{jj}}}, ~~~j=1,...,n+1.
$$

The proof of the following corollary is obtained by using the above
equations.

\begin{cor}
Let $\bigtriangleup$ be a spherical simplex with vertices
$p_{1},...,p_{n+1}$. Then the  perpendicular foot from $p_j$ to
$(n-1)-$face $W_j^s$  is given by
$$
\sigma(p_j)=\frac{p_j+\displaystyle \sqrt{\frac{|M|}{M_{jj}}}~{e_j}} {\sqrt{1- \displaystyle\frac{|M|}{M_{jj}}}}, ~~~j=1,...,n+1,
$$
where $p_{1},...,\hat p_j,...,p_{n+1}$ are vertices of $W_j^s$.
\end{cor}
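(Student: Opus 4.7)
The plan is to specialize the projection formula from Theorem~\ref{th23} to the point $p=p_j$ and to the $(n-1)$-face $W_j^s$ opposite $p_j$. Although Theorem~\ref{th23} is stated with the distinguished $k$-face carrying vertices $p_1,\ldots,p_{k+1}$ and with $W^{\perp}$ spanned by $\{e_{k+2},\ldots,e_{n+1}\}$, an identical derivation works with any choice of omitted index. For $W_j^s$ we take $k=n-1$ and the corresponding orthogonal complement is the one-dimensional subspace spanned by $e_j$, so after this (harmless) reindexing the double sum appearing in equation (8) collapses to a single term.

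The first concrete step is to invoke the two displayed equations immediately preceding the corollary, namely $\dot p_j=p_j+\sqrt{|M|/M_{jj}}\,e_j$ and $\langle p_j,e_j\rangle_E=-\sqrt{|M|/M_{jj}}$. The former is obtained by substituting $p=p_j$ into (8) and using $\langle p_j,e_t\rangle_E=0$ for $t\neq j$ (since $p_j$ lies on every face $W_t$ with $t\neq j$); the latter is the standard expression of a vertex's Euclidean inner product against the unit outer normal of its opposite face.

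Next, I would construct $\sigma(p_j)$ exactly as in the proof of Theorem~\ref{th23}: write $\sigma(p_j)=c\,\dot p_j$ with the unique constant $c$ determined by the requirement $\langle \sigma(p_j),\sigma(p_j)\rangle_E=1$. Expanding bilinearly,
$$
\langle \dot p_j,\dot p_j\rangle_E \;=\; \langle p_j,p_j\rangle_E + 2\sqrt{\tfrac{|M|}{M_{jj}}}\,\langle p_j,e_j\rangle_E + \tfrac{|M|}{M_{jj}}\,\langle e_j,e_j\rangle_E \;=\; 1 - \tfrac{|M|}{M_{jj}},
$$
so $c=1/\sqrt{1-|M|/M_{jj}}$, which gives exactly the stated formula for $\sigma(p_j)$.

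I do not expect any substantive obstacle: the computation is a direct specialization and the only care required is the bookkeeping surrounding the omitted vertex and verifying that the cross term carries the correct sign, which it does because $\langle p_j,e_j\rangle_E$ is negative (so the sum is $1-|M|/M_{jj}$ rather than $1+|M|/M_{jj}$, consistent with the fact that spherical distances satisfy $\cos\theta\le 1$).
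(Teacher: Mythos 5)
Your proposal is correct and follows essentially the same route as the paper: the paper's proof is exactly the specialization $\dot p_j=p_j+\sqrt{|M|/M_{jj}}\,e_j$ together with $\langle p_j,e_j\rangle_E=-\sqrt{|M|/M_{jj}}$, followed by the normalization $\sigma(p_j)=\dot p_j/\sqrt{\langle\dot p_j,\dot p_j\rangle_E}$, and your expansion $\langle\dot p_j,\dot p_j\rangle_E=1-2|M|/M_{jj}+|M|/M_{jj}=1-|M|/M_{jj}$ supplies the computation the paper leaves implicit. Your remarks on the reindexing of the omitted vertex and on the sign of the cross term are accurate and do not change the argument.
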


\begin{cor} Let $\bigtriangleup$ be a spherical simplex with vertices
$p_{1},...,p_{n+1}$. Then the altitude $\theta(p_j,W_j^s)$ from $p_j$ to $(n-1)-$face
$W_j^s$ is given by

$$
\cos \theta(p_j,W_j^s) =\displaystyle{\sqrt{1-
\displaystyle\frac{|M|}{M_{jj}}}}, ~~~j=1,...,n+1.
$$

\noindent where $p_{1},...,\hat p_j,...,p_{n+1}$ are vertices of $W_j^s$.
\end{cor}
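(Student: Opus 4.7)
The plan is to derive the altitude formula by computing the inner product $\langle p_j,\sigma(p_j)\rangle_E$ directly, using the explicit perpendicular foot already obtained in the previous corollary. Since the spherical distance on $S^n$ satisfies $\cos d_s(p,q)=\langle p,q\rangle_E$ for $p,q\in S^n$, and $\theta(p_j,W_j^s)$ is by definition the spherical distance from $p_j$ to its perpendicular foot $\sigma(p_j)\in W_j^s$, we obtain the identity
$$\cos\theta(p_j,W_j^s)=\langle p_j,\sigma(p_j)\rangle_E,$$
and the whole task reduces to evaluating this right-hand side.

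First I would substitute the explicit formula
$$\sigma(p_j)=\frac{p_j+\sqrt{|M|/M_{jj}}\,e_j}{\sqrt{1-|M|/M_{jj}}}$$
recorded in the preceding corollary into this inner product. Expanding by linearity and using the two ingredients $\langle p_j,p_j\rangle_E=1$ and $\langle p_j,e_j\rangle_E=-\sqrt{|M|/M_{jj}}$ (the latter displayed just before the statement), the numerator collapses to $1-|M|/M_{jj}$. Dividing by the denominator $\sqrt{1-|M|/M_{jj}}$ leaves exactly $\sqrt{1-|M|/M_{jj}}$, which is the claimed value of $\cos\theta(p_j,W_j^s)$.

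Alternatively, one can specialize Theorem~\ref{th24} directly to the case $k=n-1$ with $W^s=W_j^s$: the index range $s,t=k+2,\ldots,n+1$ degenerates to the single value $s=t=j$ (after relabelling), the minors simplify via $m_j^j=|M|$ and $m^{k+1}=M_{jj}$, and only the diagonal term $\langle p_j,e_j\rangle_E^2=|M|/M_{jj}$ survives under the square root, reproducing the formula. Either approach is essentially bookkeeping; the only delicate point — and therefore the closest thing to an obstacle — is keeping the sign conventions straight in $\langle p_j,e_j\rangle_E=-\sqrt{|M|/M_{jj}}$ and verifying that $1-|M|/M_{jj}>0$, so that the square root is real. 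The latter positivity is automatic, as it is already guaranteed by the fact that $\sigma(p_j)$ is a well-defined point of $S^n$ in the previous corollary.
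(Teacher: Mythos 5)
Your proposal is correct and follows essentially the same route the paper intends: the paper leaves this corollary to the preceding displayed equations $\dot p_j=p_j+\sqrt{|M|/M_{jj}}\,e_j$ and $\langle p_j,e_j\rangle_E=-\sqrt{|M|/M_{jj}}$, from which $\cos\theta(p_j,W_j^s)=\langle p_j,\sigma(p_j)\rangle_E$ is computed exactly as you do (your alternative via Theorem~\ref{th24} with $m_j^j=|M|$, $m^{k+1}=M_{jj}$ is the same specialization the paper records in the hyperbolic section). The only caveat is that your justification of $1-|M|/M_{jj}>0$ by appeal to the previous corollary is slightly circular, but the positivity is genuine since $|M|/M_{jj}$ is the squared norm of the component of the unit vector $p_j$ orthogonal to the span of the remaining vertices.
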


\end{document}